\numberwithin{equation}{section}
\newtheorem{theorem}{Theorem}[section]
\newtheorem{lemma}[theorem]{Lemma}
\newtheorem{proposition}[theorem]{Proposition}
\newtheorem{corollary}[theorem]{Corollary}
\theoremstyle{remark}
\newtheorem{remark}[theorem]{Remark}
\newtheorem{example}{Example}[section]
\newtheorem{definition}{Definition}[section]
\newcommand{\dl}{\delta}
\newcommand{\e}{\epsilon}
\newcommand{\io}{\iota}
\newcommand{\w}{\omega}
\newcommand{\ity}{\infty}
\newcommand{\C}{\mathbb{C}}
\newcommand{\Ob}{\mathcal{O}}
\newcommand{\ti}{\widetilde}
\newcommand{\ta}{\theta}
\newcommand{\N}{\mathbb{N}}
\newcommand{\al}{\alpha}
\begin{document}

\title[attractors and chain recurrence  in generalized semigroup]
{attractors and chain recurrence in generalized semigroup}

\author[K. Lalwani]{Kushal Lalwani}
\address{Kushal Lalwani\\Department of Mathematics\\ University of Delhi\\Delhi--110 007, India}
\email{lalwani.kushal@gmail.com }

\thanks{This work was supported by research fellowship from University Grants Commission (UGC), New Delhi.}

\subjclass[2010]{54H20, 37B20, 37B25, 54H15}
\keywords{attractor, chain recurrent set,  omega limit set, transitivity}

\begin{abstract}
In \cite {kl1}, we  extended various notions of recurrence for the generalized semigroup analogous to their counterpart in the classical  theory of dynamics. In this paper, we shall address the alternative definition of chain recurrent  set in terms of attractors, given by Hurley in \cite {mh} following Conley\rq{}s characterization in \cite {conley}. We shall also discuss the notion of topological transitivity and chain transitivity in this general setting.
\end{abstract}

\maketitle

\section{Introduction}

The main aim  of this paper is to study the notion of attractors and chain recurrence in the context of generalized semigroup. This concept for flows (continuous action of a group) on a compact metric space was originally introduced by Conley \cite {conley}. Later, Hurley in \cite {mh} extended this characterization for semiflows (continuous action of a semigroup) without the assumption of compactness. Here, we shall follow the treatment of Hurley for compact metric spaces and see how far this characterization applies in this setting of generalized semigroup.

A {\it continuous semigroup}  is a set of (non-identity) continuous self maps, of a topological space $X$,  which are closed under the composition. A semigroup $G$ is said to be generated by a family $\{g_{\al}\}_{\al}$ of  continuous self maps of a topological space $X$  if every element of $G$ can be expressed as compositions of iterations of the elements of  $\{g_{\al}\}_{\al}$. We denote this by $G=<g_{\al}>_{\al}$. The space $X$ is assumed to be Hausdorff and first countable.

Given a semigroup $G$ and $x \in X$, the set $\Ob_G(x):=\{g(x): g \in G\}\cup \{x\}$ is called the  {\it orbit} of $x$ under $G$.
A subspace $Y \subset X$ is said to be  {\it{invariant}} under $G$ if $g(y)\in Y$ for all $g\in G$ and $y\in Y$.
If $G=\ <g_{\al}>_{\al}$ then for $Y$ to be invariant it is sufficient that $g(y)\in Y$ for all $g\in \{g_{\al}\}_{\al}$ and $y\in Y$.

In \cite {kl}, we have introduced the notion of topological conjugacy of dynamics of two semigroups on two topological spaces as follows:
Let $G=\ <g_i>_{i \in \varLambda}$ be a semigroup of continuous self maps of $X$ and $\ti{G}=\ <\ti{g_i}>_{i \in \varLambda}$ be a semigroup of continuous self maps of $Y$. Two dynamical systems $(X,G)$ and $(Y,\ti{G})$ are said to be topologically conjugate if there exists a homeomorphism $\rho : X \to Y$ such that $\rho\circ g_i=\ti{g_i}\circ \rho$ for each $i \in \varLambda$.
Note that for $g \in G$, we have, $g=g_{i_1}\circ \ldots \circ g_{i_n}$ for some $g_{i_j}\in \{g_i\}_{i \in \varLambda}$.  If $\rho : X \to Y$ is a topological conjugacy then
\begin{equation} \notag
\begin{split}
\rho \circ g &= \rho \circ g_{i_1}\circ \ldots \circ g_{i_n}\\
&=\ti{g}_{i_1}\circ \ldots \circ \ti{g}_{i_n}\circ \rho\\
&=\ti{g} \circ \rho
\end{split}
\end{equation}
for $\ti{g} \in \ti{G}$.

Hurley had shown that chain recurrent set for a semiflow is the complement of the union of the set $B(A) \setminus A$, as $A$ varies over the collection of attractors and $B(A)$ denotes the basin of attraction. In setting of a generalized semigroup, the situation is fairly complicated. The omega limit set $\w (x)$ for a point in the phase space need not be same as the omega limit set for any of the point in the orbit of $x$, classically these sets are same. Since the concept of attractor is based on  the notion of limit set, we need to reformulate it in general setting. 


Furthermore, we shall discuss the concepts of transitivity in the context of a generalized semigroup. We define topological transitivity and chain transitivity in this more general setting, analogous to their counterpart in the classical theory. Subsequently, we shall develop a parallel theory of transitivity for semigroups.

Although, this is a self-contained exposition, for general reference to standard terms and basic facts from the classical theory of dynamics we recommend  Alongi and Nelson\rq{}s book \cite{Alongi}.

\section{transitivity}

In this section, we define the notion of topological transitivity and chain transitivity for generalized semigroup. The topological transitivity is a global attribute of a dynamical system. A topological transitive system has points which move from one neighborhood to any other, under the dynamics of some map. Thus, the dynamical system can not be decomposed into two invariant subsystems. For a general semigroup we define topological transitivity as follows:

\begin{definition}
The dynamical system $(X,G)$ is {\it topologically transitive} if for any two nonempty open subsets $U$ and $V$  of $X$, there exists a $g \in \widehat G=G\cup \{ identity\}$ such that $g(U) \cap V\neq \emptyset$. A nonempty invariant  subset $A$ of $X$ is said to be {\it topologically transitive}  if for any two nonempty open subsets $U$ and $V$  of $A$, there exists a $g \in \widehat G$ such that $g(U) \cap V\neq \emptyset$.
\end{definition}

\begin{remark} \label {rktt}
Equivalently, the definition for topological transitivity can also be formulated as: if for any two nonempty open subsets $U$ and $V$  of $X$, there exists a $g \in \widehat G=G\cup \{ identity\}$ such that $U \cap g^{-1}(V)\neq \emptyset$. Furthermore, since each $g \in G$  is a continuous self map of $X$, the set $g^{-1}(V)$ is also open in $X$. Therefore, by definition of topological transitivity, we have, given two  open subsets $U$ and $V$ and a $g\in G$ , there exists a $h \in \widehat G$ such that $U \cap h^{-1}\circ g^{-1}(V)\neq \emptyset$, or equivalently, $g\circ h(U) \cap V\neq \emptyset$.
\end{remark}

Also, some authors define the concept of topological transitive system in terms of existence of a dense orbit. We show that if the phase space is second countable Baire space then the topological transitive system has a dense orbit.

\begin{lemma} \label{bt}
Let  $G$ be a semigroup of continuous self maps of a second countable Baire space $X$. If \ $\bigcup_{g \in \widehat G} g^{-1}(U)$ is dense in $X$ for every nonempty open subset $U$ of $X$, then there exists $D\subset X$, such that $D$ is residual in $X$ and $\Ob_G(x)$ is dense in $X$ for all $x \in D$.
\end{lemma}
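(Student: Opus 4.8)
\emph{Proof proposal.} The plan is to run the classical Baire category argument. First I would fix a countable base $\{U_n\}_{n\in\N}$ for the topology of $X$, which exists since $X$ is second countable. For each $n$ put $W_n:=\bigcup_{g\in\widehat G}g^{-1}(U_n)$. Because every $g\in G$ is a continuous self map of $X$ and the identity is trivially continuous, each set $g^{-1}(U_n)$ is open, so $W_n$ is open; and $W_n$ is dense in $X$ by the hypothesis applied to the nonempty open set $U_n$. Thus each $W_n$ is a dense open subset of $X$, so its complement $X\setminus W_n$ is nowhere dense.

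Next I would set $D:=\bigcap_{n\in\N}W_n$. Its complement $X\setminus D=\bigcup_{n}(X\setminus W_n)$ is a countable union of nowhere dense sets, hence meager, so $D$ is residual in $X$. (Since $X$ is a Baire space, $D$ is in particular dense, although only residuality is needed for the statement.)

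It then remains to check that $\Ob_G(x)$ is dense in $X$ for every $x\in D$. Let $V$ be an arbitrary nonempty open subset of $X$ and choose $n$ with $U_n\subset V$. Since $x\in D\subset W_n=\bigcup_{g\in\widehat G}g^{-1}(U_n)$, there is some $g\in\widehat G$ with $x\in g^{-1}(U_n)$, that is, $g(x)\in U_n\subset V$. If $g$ is the identity, then $x\in V$ and $x\in\Ob_G(x)$; if $g\in G$, then $g(x)\in\Ob_G(x)$. In either case $\Ob_G(x)\cap V\neq\emptyset$, so $\Ob_G(x)$ meets every nonempty open set and is therefore dense in $X$.

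I do not anticipate a genuine obstacle here; the two points that need a little care are that the preimages $g^{-1}(U_n)$ (including under the identity) are open, so that each $W_n$ is open and the intersection $D$ is a dense $G_\delta$, and the minor bookkeeping with the identity element of $\widehat G$ against the definition $\Ob_G(x)=\{g(x):g\in G\}\cup\{x\}$, which conveniently absorbs the identity case.
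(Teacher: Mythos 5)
Your proposal is correct and follows essentially the same argument as the paper: take a countable base, observe that each $\bigcup_{g\in\widehat G}g^{-1}(U_n)$ is open and dense, intersect them to get the residual set $D$, and then check that the orbit of any $x\in D$ meets every basic open set. The only (welcome) addition is your explicit bookkeeping of the identity element against the definition of $\Ob_G(x)$, which the paper glosses over.
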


\begin{proof}
Let $\{U_i\}_{i\in \N}$ be a countable base for $X$. Then, for each $i\in \N$, $\bigcup_{g \in \widehat G} g^{-1}(U_i)$ is dense in $X$. Since each $g \in G$ is continuous and $U_i $  is open, hence $g^{-1}(U_i)$ is open. Thus, $\bigcup_{g \in \widehat G} g^{-1}(U_i)$ is open in $X$ for each $i \in \N$. Therefore, the set 
$$D=\bigcap_{i\in \N}\bigcup_{g \in \widehat G} g^{-1}(U_i)$$
is residual in $X$ and hence dense in $X$.

For $x \in D$, we have, $x \in \bigcup_{g \in \widehat G} g^{-1}(U_i)$ for each $i \in \N$. Then for each $i=1,2,3,\ldots$ there exists a $g_i \in \widehat G$ such that $x \in g_i^{-1}(U_i)$,  that is, $g_i(x) \in U_i$. Thus, $\Ob_G(x) \cap U_i \neq \emptyset$ for each $i=1,2,3,\ldots$. Since $\{U_i\}_{i\in \N}$ is a base for the topology of $X$, we have, orbit of $x$ is dense in $X$.
\end{proof}

\begin{theorem}
Let  $G$ be a semigroup of continuous self maps of a second countable Baire space $X$. If $X$ is topologically transitive then there exists $x\in X$ such that  orbit of $x$ is dense in $X$.
\end{theorem}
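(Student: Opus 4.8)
The plan is to derive this immediately from Lemma \ref{bt}. The only real work is to check that topological transitivity of $X$ supplies the density hypothesis of that lemma, namely that $\bigcup_{g \in \widehat G} g^{-1}(U)$ is dense in $X$ for every nonempty open $U \subset X$. Once this is done, Lemma \ref{bt} produces a residual set $D \subset X$ all of whose points have dense orbit, and one concludes by choosing any $x \in D$ (observing that $D$, being residual in a nonempty Baire space, is dense and in particular nonempty).

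So first I would fix a nonempty open set $U \subset X$ and an \emph{arbitrary} nonempty open set $V \subset X$, and show that $V$ meets $\bigcup_{g \in \widehat G} g^{-1}(U)$. Apply the definition of topological transitivity to the \emph{ordered} pair $(V,U)$: there is $g \in \widehat G$ with $g(V) \cap U \neq \emptyset$. By the equivalent formulation recorded in Remark \ref{rktt}, this says exactly $V \cap g^{-1}(U) \neq \emptyset$, so $V \cap \bigcup_{g \in \widehat G} g^{-1}(U) \neq \emptyset$. Since $V$ was an arbitrary nonempty open set, $\bigcup_{g \in \widehat G} g^{-1}(U)$ is dense in $X$, which is precisely the hypothesis needed.

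Then I would simply invoke Lemma \ref{bt} (whose remaining hypotheses, $G$ a semigroup of continuous self maps of a second countable Baire space, are exactly those assumed here) to obtain a residual $D \subset X$ with $\Ob_G(x)$ dense in $X$ for every $x \in D$. Since $X$ is a nonempty Baire space, the residual set $D$ is dense, hence nonempty; picking $x \in D$ finishes the argument.

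There is no serious obstacle here; the entire content is the translation between the phrasings $g(V)\cap U \neq \emptyset$ and $V \cap g^{-1}(U)\neq\emptyset$ together with the correct choice of the order of the open sets when applying the definition of transitivity. The only point that deserves a word of care is that $X \neq \emptyset$ (otherwise no point with dense orbit can exist), but this is implicit in the statement since a topologically transitive space has nonempty open sets.
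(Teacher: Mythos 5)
Your proposal is correct and follows the paper's own argument exactly: verify via the equivalent formulation in Remark \ref{rktt} that $\bigcup_{g\in\widehat G} g^{-1}(U)$ is dense for every nonempty open $U$, then invoke Lemma \ref{bt} and pick a point of the resulting residual set. Your extra care about the order of the pair of open sets and the nonemptiness of $X$ is sensible but does not change the route.
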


\begin{proof}
Since $X$ is topologically transitive, we have, for any two nonempty open subsets $U$ and $V$  of $X$, there exists a $g \in \widehat G$ such that $U \cap g^{-1}(V) \neq \emptyset$. Therefore, the set $\bigcup_{g \in \widehat G} g^{-1}(V)$ is dense in $X$. By Lemma \ref{bt}, there exists $D\subset X$, such that $D$ is residual in $X$ and $\Ob_G(x)$ is dense in $X$ for all $x \in D$. Since $X$ is a Baire  space and $D$ is  residual in $X$ and hence nonempty. Thus, there exists $x \in D$  such that   orbit of $x$ is dense in $X$.
\end{proof}

The following proposition shows that topological transitivity is preserved under conjugacy. More precisely,

\begin{proposition}
Let $(X,G)$ and $(Y,\ti{G})$ be two dynamical systems. If $\rho : X \to Y$ is a topological conjugacy, and $A \subset X$ is topologically transitive then so is $\rho(A)$.
\end{proposition}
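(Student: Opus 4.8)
The plan is to transport the topological transitivity of $A$ through the homeomorphism $\rho$ by a direct unwinding of definitions, using the conjugacy relation $\rho \circ g = \ti g \circ \rho$ that was established in the introduction for every $g \in G$ (and which trivially holds for the identity as well, so it holds for every $g \in \widehat G$). First I would note that $\rho(A)$ is a nonempty subset of $Y$ and is invariant under $\ti G$: this follows because $A$ is invariant under $G$ and $\rho \circ g_i = \ti g_i \circ \rho$ for each generator, so $\ti g_i(\rho(A)) = \rho(g_i(A)) \subset \rho(A)$, and invariance under the generators suffices by the remark in the introduction. Since $\rho$ is a homeomorphism, it restricts to a homeomorphism of $A$ onto $\rho(A)$ (with the subspace topologies), which is the fact that makes everything go through.

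Next I would take two arbitrary nonempty open subsets $U'$ and $V'$ of $\rho(A)$ and pull them back: set $U = \rho^{-1}(U') \cap A$ and $V = \rho^{-1}(V') \cap A$. Because $\rho|_A : A \to \rho(A)$ is a homeomorphism, $U$ and $V$ are nonempty open subsets of $A$, so by topological transitivity of $A$ there exists $g \in \widehat G$ with $g(U) \cap V \neq \emptyset$. Pick a point $a \in U$ with $g(a) \in V$. Then $\rho(a) \in U'$, and applying the conjugacy relation, $\ti g(\rho(a)) = \rho(g(a)) \in \rho(V) \subset V'$. Hence $\ti g(U') \cap V' \neq \emptyset$ with $\ti g \in \widehat{\ti G}$, which is exactly what is required for $\rho(A)$ to be topologically transitive.

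The only point requiring a little care — and the closest thing to an obstacle — is bookkeeping about ``open in $A$'' versus ``open in $X$'': the definition of a topologically transitive subset refers to sets open in the subspace topology of $A$, so I must be careful to phrase the pullback as an intersection with $A$ and invoke the homeomorphism $\rho|_A$ rather than $\rho$ itself, and similarly to read the conclusion in the subspace topology of $\rho(A)$. Once that is set up correctly, the argument is a routine chase through the definition of topological transitivity together with the conjugacy identity, and no compactness, countability, or Baire hypotheses are needed.
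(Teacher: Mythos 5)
Your proof is correct and follows essentially the same route as the paper's: pull the open sets back through the homeomorphism, apply transitivity of $A$, and push forward using the conjugacy identity $\rho\circ g=\ti g\circ\rho$. Your added care about the subspace topology and the explicit check that $\rho(A)$ is invariant (which the definition of a topologically transitive subset requires) are welcome refinements, but the argument is the same.
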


\begin{proof}
Let $\ti U$ and $\ti V$ be two nonempty open subsets of $Y$ whose intersection with $\rho(A)$ is also nonempty. Since $\rho$ is a homeomorphism the sets $U=\rho ^{-1}(\ti U)$ and $V=\rho ^{-1}(\ti V)$ are open in $X$ and has nonempty intersection with A. Since $A$ is topologically transitive, there exists a $g\in \widehat G$ such that $g(U\cap A)\cap (V\cap A)\neq \emptyset$. 

Therefore, we have $\rho(g(U\cap A)\cap (V\cap A))\neq \emptyset$.  Since $\rho$  is a topological conjugacy, $\ti g\rho(U\cap A)\cap \rho(V\cap A)\neq \emptyset$, $\ti g \in \ti G$. Further, $\ti g(\ti U\cap \rho(A))\cap (\ti V\cap \rho(A))\neq \emptyset$. Thus, $\rho(A)$ is topologically transitive.
\end{proof}

In anticipation of defining the concept of chain transitive system, we need to invoke the definitions of a chain and chain recurrent point from \cite {kl1}.

Let $G$ be a semigroup on a metric space $(X,d)$. Let $a,b \in X$, $g\in G$ and $\e >0$ be  given. An $(\e,g)${\it-chain} from $a$ to $b$ means a finite sequence $(a=x_1,\ldots, x_{n+1}=b;g_1, \ldots, g_n)$, where for every $i, \ x_i\in X$ and $g_i \in \widehat G$ such that $d(g_i\circ g(x_i),x_{i+1})< \e$ for $i=1, \ldots ,n$.
%
Let $G$ be a semigroup on a metric space $(X,d)$. A pair of points $a,b\in X$ are called  {\it chain equivalent points} for $G$ if for every $\e>0$ and every $g\in G$ there exists an $(\e, g)$-chain from $a$ to $b$ and  an $(\e, g)$-chain from $b$ to $a$. The set of all chain equivalent points for $G$ is denoted by $CE(G)$.
%
Let $G$ be a semigroup on a metric space $(X,d)$. A point $x\in X$ is called a {\it chain recurrent point} for $G$ if for every $\e>0$ and every $g\in G$ there exists an $(\e, g)$-chain from $x$ to itself. The set of all chain recurrent points for $G$ is denoted by $CR(G)$.

\begin{definition}
Let $G$ be a semigroup on a metric space $(X,d)$. A nonempty subset $A$ of $X$ is said to be {\it chain transitive} if for each $a,b \in A$, $\e>0$ and $g\in G$, there exists an $(\e, g)$-chain from $a$ to $b$.
\end{definition}

\begin{remark} \label {rkct}
In the above definition, one can see, by interchanging the roles of  $a$ and $b$, any two points in a chain transitive set are chain equivalent. Also, letting $a=b$, we have, every element of a chain transitive set is chain recurrent.
\end{remark}

The following theorem shows that any topologically transitive subset is chain transitive.

\begin{theorem}
If $G$ is an abelian semigroup on a metric space $(X,d)$, then  any topologically transitive subset of $X$ with respect to $G$ is chain transitive.
\end{theorem}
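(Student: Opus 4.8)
The plan is to reduce the whole statement to the construction of a single two-step chain. Fix $a,b\in A$, $\e>0$ and $g\in G$; we must produce an $(\e,g)$-chain from $a$ to $b$. First I would note that $g(a)\in A$ because $A$ is invariant, so that $U:=B(g(a),\e)\cap A$ and $V:=B(b,\e)\cap A$ are nonempty open subsets of $A$ (they contain $g(a)$ and $b$ respectively). Next I would apply topological transitivity of $A$ in the sharper form recorded in Remark~\ref{rktt}: the reasoning there carries over verbatim to the invariant subspace $A$, since each element of $G$ restricts to a continuous self map of $A$, so applied to $U$, $V$ and the element $g$ it produces an $h\in\widehat G$ and a point $z\in U$ with $g\circ h(z)\in V$.

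Here is the one place where commutativity is used. Since $G$ is abelian, $g\circ h(z)=h\circ g(z)=h\bigl(g(z)\bigr)$, hence $h\bigl(g(z)\bigr)\in V=B(b,\e)$. Now the chain writes itself: set $x_1=a$, $x_2=z$, $x_3=b$, $g_1=\mathrm{id}$ and $g_2=h$. The first chain inequality, $d\bigl(g_1\circ g(x_1),x_2\bigr)=d\bigl(g(a),z\bigr)<\e$, holds because $z\in U\subseteq B(g(a),\e)$; the second, $d\bigl(g_2\circ g(x_2),x_3\bigr)=d\bigl(h(g(z)),b\bigr)<\e$, is exactly what was arranged. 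Thus $(a,z,b;\mathrm{id},h)$ is an $(\e,g)$-chain from $a$ to $b$, and as $a,b,\e,g$ were arbitrary, $A$ is chain transitive. Interchanging $a$ and $b$ and then specialising to $a=b$ recovers, as free by-products, the chain-equivalence and chain-recurrence observations of Remark~\ref{rkct}.

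The step I expect to carry all the weight is the passage from the open-set formulation of transitivity to the pointwise chain condition: the base map $g$ is \emph{forced} into every link of a chain, so the perturbation $h$ supplied by transitivity must be applied after that forced copy of $g$. It is precisely the abelian hypothesis that lets us commute $h$ past $g$ so that the unavoidable application of $g$ does not obstruct the final jump into $B(b,\e)$; without it one would instead have to analyse the set $\bigl(g|_A\bigr)^{-1}(B(b,\e)\cap A)$ directly and argue separately that it is nonempty, which is the genuinely delicate point. No compactness or countability assumptions enter.
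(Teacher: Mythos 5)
Your proof is correct and follows essentially the same route as the paper's: both force a copy of the prescribed map $g$ into the transitivity statement via Remark~\ref{rktt}, use the abelian hypothesis to commute the resulting $h$ past $g$, and read off a two-link $(\e,g)$-chain whose first map is the identity. Your version is marginally cleaner in that you centre the first ball at $g(a)$ (using invariance of $A$) rather than at $a$, which lets you work with $g$ instead of $g^2$ and dispenses with the paper's continuity/$\delta$ step.
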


\begin{proof}
Let $A$ be a topologically transitive subset of $X$ and $a,b \in A$. Let $g \in G$ and $\e >0$ be given. Since $g$ is continuous there exists a $\dl>0$ such that $d(a,y)<\dl$ implies $d(g(a),g(y))<\e$, $y\in X$.

Since $A$ is topologically transitive, for open sets $B(a,\dl)$ and $B(b,\e)$, by Remark \ref {rktt}, there exists $h\in \widehat G$ such that
$$g^2\circ h(B(a,\dl)\cap A) \cap (B(b,\e)\cap A) \neq \emptyset .$$
Since $G$ is abelian,
$$h \circ g^2(B(a,\dl)\cap A) \cap (B(b,\e)\cap A) \neq \emptyset .$$

Thus, there is a $c\in B(a,\dl)\cap A$  such that $hg^2(c)\in B(b,\e)\cap A$. That is,  $d(hg^2(c),b)<\e$ and $d(a,c)<\dl$ hence $d(g(a),g(c))<\e$. So that
$$(a,g(c),b;identity, h)$$
is an $(\e,g)$-chain from $a$ to $b$. Therefore, $A$ is chain transitive.
\end{proof}

\begin{theorem}
Let $(X,G)$ and $(Y,\ti{G})$ be two dynamical systems on the metric spaces $(X,d_X)$ and $(Y,d_Y)$. If a uniform homeomorphism $\rho : X \to Y$ is a topological conjugacy and $A\subset X$  is chain transitive then so is $\rho(A)$.
\end{theorem}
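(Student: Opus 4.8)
The plan is to show directly that every $(\e,g)$-chain in $X$ for $G$ transports, under the conjugacy $\rho$, to an $(\e',\ti g)$-chain in $Y$ for $\ti G$, where $\e'$ depends on $\e$ through the uniform continuity of $\rho$, and that $\e'$ can be made arbitrarily small. Concretely, fix $\ti a, \ti b \in \rho(A)$, a map $\ti g \in \ti G$, and $\e > 0$. Write $a = \rho^{-1}(\ti a)$, $b = \rho^{-1}(\ti b)$, both in $A$, and note that since $\rho$ is a topological conjugacy there is a corresponding $g \in G$ with $\rho \circ g = \ti g \circ \rho$; more generally $\rho \circ g_i = \ti g_i \circ \rho$ on the generators, so $\rho \circ h = \ti h \circ \rho$ for the matching $h \in \widehat G$ and $\ti h \in \widehat{\ti G}$.

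First I would use the uniform continuity of $\rho^{-1} : Y \to X$ to pick $\dl > 0$ such that $d_Y(y,y') < \dl$ implies $d_X(\rho^{-1}(y),\rho^{-1}(y')) < \e$. Then, since $A$ is chain transitive, there is a $(\dl,g)$-chain from $a$ to $b$, say $(a = x_1,\ldots,x_{n+1} = b; h_1,\ldots,h_n)$ with each $h_i \in \widehat G$ and $d_X(h_i \circ g(x_i), x_{i+1}) < \dl$ — wait, the direction I actually want is to push a chain in $X$ forward to $Y$, so instead I would use uniform continuity of $\rho$ itself: choose $\dl>0$ with $d_X(u,u')<\dl \Rightarrow d_Y(\rho(u),\rho(u'))<\e$, take a $(\dl,g)$-chain $(a=x_1,\ldots,x_{n+1}=b;h_1,\ldots,h_n)$ in $X$, and set $y_i = \rho(x_i)$, $\ti h_i$ the generator-word in $\ti G$ matching $h_i$. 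Then $\rho(h_i \circ g(x_i)) = \ti h_i \circ \ti g(\rho(x_i)) = \ti h_i \circ \ti g(y_i)$ by the conjugacy relation, and $d_X(h_i\circ g(x_i), x_{i+1}) < \dl$ forces $d_Y(\ti h_i \circ \ti g(y_i), y_{i+1}) < \e$. Hence $(\ti a = y_1,\ldots,y_{n+1} = \ti b; \ti h_1,\ldots,\ti h_n)$ is an $(\e,\ti g)$-chain from $\ti a$ to $\ti b$. Finally, one checks that each $y_i = \rho(x_i) \in \rho(A)$ since $x_i \in A$, so the chain stays in $\rho(A)$; actually the definition of chain only requires $x_i \in X$, so this is not even needed, but it is harmless to record.

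The main obstacle — and the reason the hypothesis is upgraded from "homeomorphism" to "uniform homeomorphism" — is precisely that chain transitivity is a metric (uniform) notion, not a topological one: a bare homeomorphism need not send $\dl$-close pairs to $\e$-close pairs, so a $(\dl,g)$-chain in $X$ could fail to map to any $(\e,\ti g)$-chain in $Y$. Uniform continuity of $\rho$ is exactly what lets us calibrate $\dl$ to $\e$; symmetry of the roles of $(X,G)$ and $(Y,\ti G)$ (using uniform continuity of $\rho^{-1}$) would give the converse, though here we only need the one direction. A secondary, purely bookkeeping point is making sure the conjugacy relation $\rho \circ h = \ti h \circ \rho$ is invoked correctly for composite elements $h \in \widehat G$ and for the identity — this is already spelled out in the displayed computation in the excerpt, so I would just cite that. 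Since $\e > 0$, $\ti g$, $\ti a$, $\ti b$ were arbitrary, $\rho(A)$ is chain transitive, completing the proof.
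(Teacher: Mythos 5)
Your proof is correct and takes essentially the same route as the paper: both arguments push a $(\dl,g)$-chain in $X$ forward through $\rho$, using uniform continuity of $\rho$ to calibrate $\dl$ to $\e$ and the conjugacy relation $\rho\circ h_i\circ g=\ti h_i\circ\ti g\circ\rho$ to verify the pushed-forward sequence is an $(\e,\ti g)$-chain. The momentary detour toward uniform continuity of $\rho^{-1}$ is harmless since you immediately correct the direction.
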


\begin{proof}
Let $x , \bar x \in A$.  Let $\e >0$ be given and some $\ti g \in \ti G$. We shall construct an $(\e,\ti g)$-chain from $y=\rho(x)$ to $\bar y=\rho(\bar x)$. Let $g \in G$ be such that $\rho \circ g=\ti g \circ \rho$.

Since $\rho : X \to Y$ is uniformly continuous, there exists a $\dl >0$ such that if $d_X(x_1,x_2)< \dl$ then $d_Y(\rho(x_1),\rho(x_2))< \e$, for $x_1,x_2 \in X$.  Also $x , \bar x \in A$ implies there is a $(\dl, g)$-chain $(x=x_1,\ldots , x_{n+1}=\bar x;g_1, \ldots ,g_n)$ from $x$ to $\bar x$. That is, for each $i=1, \ldots ,n$, we have,
$$d_X(g_i\circ g(x_i),x_{i+1})<\dl .$$

For  each $i=1, \ldots ,n+1$, let $y_i=\rho(x_i)$. Since $d_X(g_i\circ g(x_i),x_{i+1})<\dl $, we have,
$$ d_Y(\ti g_i\circ \ti g(y_i),y_{i+1})=d_Y(\rho \circ g_i\circ g(x_i),\rho(x_{i+1})) <\e.$$

Thus $(\rho(x)=y_1,\ldots , y_{n+1}=\rho(\bar x); \ti g_1, \ldots ,\ti g_n)$ is an $(\e,\ti g)$-chain  from $y=\rho(x)$ to $\bar y=\rho(\bar x)$.  Therefore $\rho(A) \subset Y$ is chain transitive.
\end{proof}

Next we show that a chain component is a maximal chain transitive set.

\begin{proposition}
The relation $R$ defined by: $aRb$ if and only if  $(a,b)\in CE(G)$, is an equivalence relation on $CR(G)$.
\end{proposition}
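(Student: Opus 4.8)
The plan is to verify the three defining properties of an equivalence relation — reflexivity, symmetry, and transitivity — for the relation $R$ on $CR(G)$, working directly from the definition of chain equivalent points. Reflexivity is immediate: if $x \in CR(G)$ then, by definition of the chain recurrent set, for every $\e > 0$ and every $g \in G$ there is an $(\e,g)$-chain from $x$ to itself, so $(x,x) \in CE(G)$ and hence $xRx$. Symmetry is built into the definition of $CE(G)$: the condition ``$(a,b)\in CE(G)$'' requires both an $(\e,g)$-chain from $a$ to $b$ \emph{and} one from $b$ to $a$, so it is visibly symmetric, and therefore $aRb$ implies $bRa$.

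The substantive step is transitivity. Suppose $aRb$ and $bRc$ with $a,b,c \in CR(G)$; fix $\e > 0$ and $g \in G$. I would first produce an $(\e,g)$-chain from $a$ to $c$ by concatenation: since $aRb$ there is an $(\e,g)$-chain $(a = x_1,\dots,x_{m+1} = b;\, g_1,\dots,g_m)$, and since $bRc$ there is an $(\e,g)$-chain $(b = y_1,\dots,y_{n+1} = c;\, h_1,\dots,h_n)$; juxtaposing them at the common point $b$ yields the finite sequence $(a = x_1,\dots,x_{m+1} = b = y_1,\dots,y_{n+1} = c;\, g_1,\dots,g_m,h_1,\dots,h_n)$, and every consecutive estimate $d(\cdot,\cdot) < \e$ still holds because it held in one of the two original chains. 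The same concatenation argument, applied to the $(\e,g)$-chain from $b$ to $a$ followed by the one from $c$ to $b$ — wait, in the correct order, the $(\e,g)$-chain from $c$ to $b$ followed by the one from $b$ to $a$ — gives an $(\e,g)$-chain from $c$ to $a$. Hence $(a,c) \in CE(G)$, i.e.\ $aRc$.

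The one point that requires a word of care, and which I expect to be the only real obstacle, is that membership in $CR(G)$ must be preserved — but this is automatic: $a,b,c$ are assumed to lie in $CR(G)$ to begin with, and $R$ is only being asserted to be an equivalence relation \emph{on} $CR(G)$, so no closure property of $CR(G)$ is needed. It is worth remarking (and I would state this explicitly) that the concatenation works uniformly in $g$: the same $g \in G$ appears in both input chains and in the output chain, so the quantifier structure ``for every $\e$ and every $g$'' is respected throughout. With reflexivity, symmetry, and transitivity established, $R$ is an equivalence relation on $CR(G)$, completing the proof.
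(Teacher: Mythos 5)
Your proposal is correct and follows essentially the same route as the paper: reflexivity from the definition of $CR(G)$, symmetry from the symmetric form of the definition of $CE(G)$, and transitivity by concatenating the $(\e,g)$-chains at the common point $b$ (in the correct order for both directions). No gaps.
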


\begin{proof}
{\bf Reflexive:} Let $a \in CR(G)$. Then for every $\e>0$ and $g\in G$ there exists an $(\e, g)$-chain from $a$ to itself. Hence $aRa$

{\bf Symmetric:} Let $aRb$, that is,   $(a,b)\in CE(G)$. Then for every $\e>0$ and $g\in G$ there exists an $(\e, g)$-chain from $a$ to $b$ and  an $(\e, g)$-chain from $b$ to $a$. Thus $bRa$.

{\bf Transitive:} Let $aRb$ and $bRc$. Then for every $\e>0$ and $g\in G$ there exists an $(\e, g)$-chain from $a$ to $b$ and  an $(\e, g)$-chain from $b$ to $a$. Also $bRc$ implies for every $\e>0$ and $g\in G$ there exists an $(\e, g)$-chain from $c$ to $b$ and  an $(\e, g)$-chain from $b$ to $c$. By concatenating the two $(\e,g)$-chains from $a$ to $b$ and $b$ to $c$, we have, an $(\e,g)$-chain from $a$ to $c$. Again,  by concatenating the two $(\e,g)$-chains from $c$ to $b$ and $b$ to $a$, we have, an $(\e,g)$-chain from $c$ to $a$. Thus, we have, $aRc$.
\end{proof}

\begin{definition}
The equivalence relation $R$ defined by: $aRb$ if and only if  $(a,b)\in CE(G)$, is the chain equivalence relation for $G$ on $X$. An equivalence class of the chain equivalence relation for $G$ is called a {\it chain component} of $G$.
\end{definition}

\begin{proposition} \label {ct}
If $G$ is abelian then every chain component of $G$ is invariant under $G$.
\end{proposition}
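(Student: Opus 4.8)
The plan is to fix a chain component $C$, a point $a\in C$, and an element $g\in G$, and to show $g(a)\in C$. Since $C$ consists exactly of the points of $CR(G)$ that are chain equivalent to $a$, everything reduces to proving that for every $\e>0$ and every $h\in G$ there is an $(\e,h)$-chain from $a$ to $g(a)$ and an $(\e,h)$-chain from $g(a)$ to $a$. Granting these, $(a,g(a))\in CE(G)$ by definition, and concatenating a $g(a)$-to-$a$ chain with an $a$-to-$g(a)$ chain (at the same $\e$ and $h$) produces an $(\e,h)$-chain from $g(a)$ to itself, so $g(a)\in CR(G)$ as well; hence $g(a)\in C$, and since $a,g$ were arbitrary, $g(C)\subseteq C$. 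The two chain constructions are not symmetric, and I expect the direction $g(a)\to a$ to be the real obstacle.

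First I would do the easy direction, $a\to g(a)$, which uses no commutativity. Given $\e>0$ and $h\in G$, continuity of $g$ at the single point $a$ yields $\dl\in(0,\e]$ with $d(z,a)<\dl$ implying $d(g(z),g(a))<\e$. As $a\in CR(G)$, pick a $(\dl,h)$-chain $(a=x_1,\ldots,x_{n+1}=a;g_1,\ldots,g_n)$ from $a$ to $a$ and replace its last point and last multiplier to form $(a=x_1,\ldots,x_n,g(a);g_1,\ldots,g_{n-1},g\circ g_n)$. The multipliers still lie in $\widehat G$, every step but the last is inherited with error $<\dl\le\e$, and the last step satisfies $d\big((g\circ g_n)\circ h(x_n),g(a)\big)=d\big(g(g_nh(x_n)),g(a)\big)<\e$ because $d(g_nh(x_n),a)<\dl$. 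So this is an $(\e,h)$-chain from $a$ to $g(a)$.

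The hard part is the direction $g(a)\to a$. The naive moves --- modifying the last step of a chain from $a$ to $a$, or applying $g$ termwise to one --- each leave an extra, uncancellable $g$ on the terminal point, and $g$ need not be invertible, so one cannot get back to $a$ that way. The trick I would use is that $CR(G)$ is defined via chains for \emph{every} element of $G$, so one can run a chain relative to the composite $g\circ h\in G$ and let commutativity move the surplus $g$ to the \emph{front}. Concretely, for $\e>0$ and $h\in G$, take an $(\e,g\circ h)$-chain $(a=x_1,\ldots,x_{n+1}=a;g_1,\ldots,g_n)$ from $a$ to $a$, so $d\big(g_i\circ(g\circ h)(x_i),x_{i+1}\big)<\e$ for all $i$. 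Since $G$ is abelian, $g_i\circ g\circ h=g\circ g_i\circ h$, so these inequalities read $d\big(g(g_ih(x_i)),x_{i+1}\big)<\e$. Now take the sequence with points $z_1=g(a)$ and $z_i=x_i$ for $2\le i\le n+1$, and multipliers $\ga_1=g_1$, $\ga_i=g\circ g_i$ for $2\le i\le n$, all in $\widehat G$. For $2\le i\le n$ the requirement $d(\ga_i\circ h(z_i),z_{i+1})<\e$ is exactly $d\big(g(g_ih(x_i)),x_{i+1}\big)<\e$; for $i=1$, commutativity gives $\ga_1\circ h(z_1)=g_1\big(h(g(a))\big)=g\big(g_1h(a)\big)$, which is within $\e$ of $x_2$ by the $i=1$ inequality. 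Hence this is an $(\e,h)$-chain from $g(a)$ to $a$, completing the argument as outlined in the first paragraph.
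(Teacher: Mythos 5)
Your proof is correct and follows essentially the same route as the paper's: for the direction $g(a)\to a$ you run a chain relative to the composite $g\circ h$ and use commutativity to absorb the leading $g$ into the first step, and for $a\to g(a)$ you use continuity of $g$ at $a$ to append $g$ to the final step of a finer chain. The only (harmless) differences are that you treat an arbitrary $g\in G$ rather than a generator, and your easy direction uses a $(\dl,h)$-chain where the paper uses a $(\dl,g\circ h)$-chain.
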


\begin{proof}
Let $C$ be a chain component of $G$ on a metric space $(X,d)$. Let $x \in C$ and $g$ be any generator of $G$.  Let $\e >0$ be given and some $h \in G$. We shall construct  $(\e,h)$-chains from $g(x)$ to $x$ and $x$ to $g(x)$. It will follow that $g(x)\in C$.

Since $x \in CR(G)$ implies there is an $(\e, g\circ h)$-chain $(x=x_1,\ldots , x_{n+1}=x;h_1, \ldots ,h_n)$ from $x$ to itself. That is, for each $i=1, \ldots ,n$, we have,
$$d(h_i \circ g \circ h(x_i),x_{i+1})< \e.$$

In particular, since $G$ is abelian, we have,
\begin{equation} \notag
\begin{split}
d(h_1 \circ h \circ g(x_1),x_2)& =d(h_1 \circ g \circ h(x_1),x_2)\\
& \leq \e.
\end{split}
\end{equation}

Hence $(g(x),x_2,\ldots , x_n,x;h_1,h_2 \circ g, \ldots,h_{n-1}\circ g ,h_n\circ g)$ is  an $(\e,h)$-chain from $g(x)$ to $x$.

Further, since $g$ is continuous, there exists a $\dl \in (0, \e]$ such that if $d(x,y)< \dl$  then $ d(g(x),g(y)) < \e.$  Also $x \in CR(G)$ implies there is a $(\dl, g\circ h)$-chain $(x=x_1,\ldots , x_{n+1}=x;h_1, \ldots ,h_n)$ from $x$ to itself. That is, for each $i=1, \ldots ,n$, we have,
$$d(h_i \circ g \circ h(x_i),x_{i+1})<\dl \leq \e.$$

Also by the continuity of $g$, we have,
$$d(h_n \circ g \circ h(x_n),x)<\dl \ {\rm implies} \ d(g \circ h_n \circ g \circ h(x_n),g(x))<\e.$$

Hence $(x,x_2,\ldots , x_n,g(x);h_1\circ g,h_2 \circ g, \ldots,h_{n-1}\circ g ,g \circ h_n\circ g)$ is  an $(\e,h)$-chain from $x$ to $g(x)$.

Thus, $(x,g(x)) \in CE(G)$. Also,  by concatenating the two $(\e,g)$-chains from $g(x)$ to $x$ and $x$ to $g(x)$, we obtain, an $(\e,g)$-chain from $g(x)$ to itself. Hence $g(x)$ is a chain recurrent point and an element of $C$. Therefore, every chain component of $G$ is invariant under $G$, provided $G$ is abelian.
\end{proof}

\begin{corollary}
If $G$ is abelian then the chain recurrent set $CR(G)$ is invariant under $G$.
\end{corollary}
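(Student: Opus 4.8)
The plan is to deduce this immediately from Proposition \ref{ct} by writing $CR(G)$ as a union of chain components. First I would recall that, by the preceding proposition, the relation $R$ given by $aRb \iff (a,b)\in CE(G)$ is an equivalence relation on $CR(G)$; in particular it is reflexive, so every $x\in CR(G)$ lies in its own equivalence class. Hence $CR(G)$ is precisely the union of all chain components of $G$.

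Next I would invoke Proposition \ref{ct}: since $G$ is abelian, each chain component $C$ of $G$ satisfies $g(C)\subseteq C$ for every $g\in G$. A union of $G$-invariant sets is again $G$-invariant (if $x$ lies in some chain component $C$ and $g\in G$, then $g(x)\in C\subseteq CR(G)$), so $CR(G)=\bigcup C$ is invariant under $G$. This completes the argument.

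Alternatively, one can give the one-line direct version: in the proof of Proposition \ref{ct} it was already shown that, when $G$ is abelian, for any $x\in CR(G)$ and any generator $g$ of $G$ the point $g(x)$ is again chain recurrent; by the sufficiency criterion for invariance recorded in the introduction (a set is invariant under $G=\langle g_\al\rangle_\al$ once it is invariant under the generators $\{g_\al\}_\al$), it follows that $CR(G)$ is invariant under all of $G$. I do not expect any genuine obstacle here — the only points requiring a word of care are that $CR(G)$ really is exhausted by the chain components (which uses reflexivity of $R$ on $CR(G)$) and that it suffices to test invariance on the generators; both are already available from the material above.
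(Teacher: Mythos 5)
Your argument is correct and is essentially the paper's own proof: both decompose $CR(G)$ as the union of chain components (the equivalence classes of $R$) and then apply Proposition \ref{ct} to conclude that a union of invariant chain components is invariant. The extra care you take in noting that reflexivity ensures the components exhaust $CR(G)$ is a welcome clarification but not a different method.
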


\begin{proof}
Since the collection of chain components partition the chain recurrent set. Also, by Proposition \ref {ct}, every chain component of $G$ is invariant,  provided $G$ is abelian. Therefore, the chain recurrent set  is invariant under $G$.
\end{proof}

\begin{theorem} 
Every chain component of $G$  is closed.
\end{theorem}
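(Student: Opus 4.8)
The plan is to prove the sharper fact that each chain component equals its closure. Fix a chain component $C$ and a point $a\in C$. Since $C$ is precisely the equivalence class of $a$ under the chain equivalence relation $R$, and $R$ lives on $CR(G)$, it suffices to show that every $x\in\overline{C}$ satisfies $(x,a)\in CE(G)$: concatenating an $(\e,g)$-chain from $x$ to $a$ with one from $a$ to $x$ produces an $(\e,g)$-chain from $x$ to itself for all $\e>0$ and $g\in G$, so $x\in CR(G)$, and then $xRa$ places $x$ into the class $C$. So I fix $x\in\overline{C}$, $\e>0$, $g\in G$, and aim to build an $(\e,g)$-chain from $a$ to $x$ and one from $x$ to $a$.

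The chain from $a$ to $x$ is routine: choose $x'\in C$ with $d(x',x)<\e/2$ (possible since $x\in\overline{C}$); as $a,x'\in C$ we have $aRx'$, hence an $(\e/2,g)$-chain $(a=z_1,\ldots,z_{k+1}=x';g_1,\ldots,g_k)$ from $a$ to $x'$, and replacing its terminal point $x'$ by $x$ leaves every comparison valid because $d(g_k\circ g(z_k),x)\le d(g_k\circ g(z_k),x')+d(x',x)<\e/2+\e/2=\e$. Thus $(a=z_1,\ldots,z_k,x;g_1,\ldots,g_k)$ is an $(\e,g)$-chain from $a$ to $x$; here only the endpoint of a given chain was perturbed.

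The chain from $x$ to $a$ is the heart of the argument and the step I expect to be the main obstacle, because now one must perturb the \emph{initial} point of a chain, and the first comparison $d(g_1\circ g(x_1),x_2)<\e$ routes that perturbation through the map $g_1\circ g$, whose leading factor $g_1\in\widehat G$ carries no controlled modulus of continuity. The remedy I would use is to force the first jump-map to be the identity, so that only the continuity of the fixed map $g$ is invoked. Concretely, pick $\dl\in(0,\e)$ with $d(x,y)<\dl\Rightarrow d(g(x),g(y))<\e$, and choose $x'\in C$ with $d(x',x)<\dl$. Since chain components are invariant under $G$ (Proposition \ref{ct}), $g(x')\in C$, whence $g(x')Ra$ and there is an $(\e,g)$-chain $(g(x')=u_1,\ldots,u_{l+1}=a;p_1,\ldots,p_l)$ from $g(x')$ to $a$. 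Then $(x,u_1,\ldots,u_{l+1}=a;\text{identity},p_1,\ldots,p_l)$ is an $(\e,g)$-chain from $x$ to $a$: its first comparison is $d(\text{identity}\circ g(x),g(x'))=d(g(x),g(x'))<\e$ by the choice of $\dl$, and the remaining comparisons are exactly those of the chain from $g(x')$ to $a$.

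As $\e$ and $g$ were arbitrary this gives $(x,a)\in CE(G)$, hence $x\in C$, so $\overline{C}=C$ and $C$ is closed. The only genuinely delicate point is the $x\to a$ construction above; it leans on the invariance of $C$ (and hence, via Proposition \ref{ct}, on commutativity of $G$), and in the absence of commutativity the lack of a controlled modulus of continuity when perturbing the starting point of a chain would be the real difficulty to be circumvented by another device.
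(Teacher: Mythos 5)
Your overall strategy is the same as the paper's: approximate the limit point by a point of $C$, repair the chain \emph{into} the limit point by perturbing its endpoint, and repair the chain \emph{out of} the limit point by prepending an identity-jump step that routes the perturbation through the continuity of the fixed map $g$. The first half (the chain from $a$ to $x$) is fine and matches the paper. The problem is exactly where you suspected: your chain from $x$ to $a$ invokes Proposition \ref{ct} to conclude $g(x')\in C$, and that proposition requires $G$ to be abelian, while the theorem carries no such hypothesis. As written, you have proved a weaker statement (closedness of chain components for abelian $G$), so relative to the stated theorem this is a genuine gap, not just a stylistic difference.

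The paper's device for avoiding commutativity is worth recording, since it answers the question you posed at the end. Instead of pushing $x'$ forward by $g$ and needing $g(x')$ to lie in $C$, one uses that $x',a\in C$ gives a $(\dl',g^2)$-chain $(x'=x_1,\ldots,x_{n+1}=a;g_1,\ldots,g_n)$, where $\dl'=\min\{\dl,\e/2\}$ and $\dl$ is a modulus of continuity for $g$ at the relevant point with output tolerance $\e$. One then forms
$$(x,\;g(x'),\;x_2,\;\ldots,\;x_{n+1}=a;\;\mathrm{identity},\;g_1,\;g_2\circ g,\;\ldots,\;g_n\circ g),$$
which is an $(\e,g)$-chain: the first comparison is $d(g(x),g(x'))<\e$ by continuity; the second is $d(g_1\circ g(g(x')),x_2)=d(g_1\circ g^2(x_1),x_2)<\dl'\le\e$; and for $i\ge 2$ the comparison $d((g_i\circ g)\circ g(x_i),x_{i+1})=d(g_i\circ g^2(x_i),x_{i+1})<\dl'\le\e$ reproduces the original ones. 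In other words, the extra factor of $g$ created by the prepended step is absorbed into the jump maps of a $g^2$-chain rather than commuted past them, so no hypothesis on $G$ beyond continuity of its elements is needed. With that substitution for your second construction, your argument goes through for general $G$.
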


\begin{proof}
Let $C$ be a chain component of $G$ on a metric space $(X,d)$.
 Suppose that $a$ is a limit point of $C$ and $y\in C$. Let $\e >0$ be given and some $g \in G$. First we shall construct an $(\e,g)$-chain from $a$ to $y$.

Since $g$ is continuous, there exists a $\dl>0$  such that $d(x,a)<\dl \ {\rm implies} \ d(g(x),g(a))< \e$.

Take $\dl\rq{}=min\{\dl,\e/2\}$. Since $a$ is a limit point of $C$, there exists a $x \in C$ such that $d(x,a)<\dl\rq{}$. Then $d(g(x),g(a))< \e$. 

Since $x,y\in C$, there exists a $(\dl\rq{},g^2)$-chain $(x=x_1,\ldots, x_{n+1}=y;g_1, \ldots, g_n)$ from $x$ to $y$. So that $(a,g(x),x_2,\ldots, x_{n+1}=y;identity,g_1,g_2g ,\ldots, g_ng)$ is an $(\e,g)$-chain from $a$ to $y$.

Also, $x,y\in C$ implies there exists an $(\e/2,g)$-chain $(y=y_1,\ldots, y_{n+1}=x;h_1, \ldots, h_n)$ from $y$ to $x$. 
Since
\begin{equation} \notag
\begin{split}
d(h_ng(y_n),a) &<d(h_ng(y_n),x)+d(x,a)\\
&<\frac{\e}{2}+\frac{\e}{2}\\
&=\e,
\end{split}
\end{equation}
we have, $(y=y_1,\ldots, y_n,a;h_1, \ldots, h_n)$ is an $(\e,g)$-chain from $y$ to $a$.

By transitivity via concatinating these $(\e,g)$-chains, there is an $(\e,g)$-chain from $a$ to itself.  Hence $a$ is a chain recurrent point and an element of $C$. Therefore, every chain component of $G$ is closed.
\end{proof}

\begin{theorem}
A chain component of $G$ is a maximal chain transitive set.
\end{theorem}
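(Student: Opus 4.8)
The plan is to verify the two halves of the assertion separately: first that a chain component $C$ is itself chain transitive, and then that it is maximal with respect to inclusion among all chain transitive subsets of $X$.

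For the first half, let $C$ be a chain component, that is, an equivalence class of the chain equivalence relation $R$ on $CR(G)$. Given $a,b\in C$ we have $aRb$ by definition, so $(a,b)\in CE(G)$; hence for every $\e>0$ and every $g\in G$ there is an $(\e,g)$-chain from $a$ to $b$. This is precisely the defining property of chain transitivity, so $C$ is chain transitive. Note also that $C\neq\emptyset$, being an equivalence class, so the definition applies non-vacuously.

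For maximality, I would suppose $A$ is a chain transitive subset of $X$ with $C\subseteq A$, and show $A\subseteq C$, whence $A=C$. Fix an arbitrary $x\in A$ and choose some $y\in C$ (possible since $C\neq\emptyset$); then $x,y\in A$. Since $A$ is chain transitive, Remark \ref{rkct} yields two things at once: $x$ is a chain recurrent point, so $x\in CR(G)$, and $x,y$ are chain equivalent, i.e.\ $(x,y)\in CE(G)$, that is $xRy$. As $C$ is exactly the $R$-equivalence class of $y$ within $CR(G)$ and $x\in CR(G)$, it follows that $x\in C$. Since $x\in A$ was arbitrary, $A\subseteq C$, so $A=C$, and therefore $C$ is a maximal chain transitive set.

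There is no substantial obstacle in this argument; it is a direct unwinding of the definitions. The only points that require care are (i) recording that $C$ is nonempty so that a reference point $y\in C$ is available, and (ii) using Remark \ref{rkct} for both of its consequences — that every element of a chain transitive set is chain recurrent (so that such elements are eligible to belong to a chain component, a subset of $CR(G)$) and that any two of its elements are chain equivalent. Once these are observed, the maximality is immediate.
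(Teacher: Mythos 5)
Your proof is correct and follows essentially the same route as the paper: both halves rest on the definition of a chain component as an equivalence class of $R$ together with Remark \ref{rkct}, which supplies exactly the two facts you invoke (elements of a chain transitive set are chain recurrent, and any two of them are chain equivalent). Your maximality argument is a slightly more direct, element-wise version of the paper's observation that every chain transitive set lies in a unique chain component, but the underlying idea is identical.
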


\begin{proof}
By definition of a chain component, any two points in a chain components are chain equivalent. Hence, a chain component is a chain transitive set.

Furthermore, by Remark \ref {rkct}, every chain transitive set is contained in the chain recurrent set. Since the chain recurrent set has  partitioned  by disjoint equivalence classes of chain components. Therefore, any chain transitive set is contained in a unique chain component. Now, if $A$ and $B$ are two chain transitive sets of a semigroup, with  $A \subset B$ and $C$ is the unique chain component containing $A$. Then, $B \cap C \neq \emptyset$ and hence $C$ is the unique chain component containing $B$. Thus, a chain component of $G$ is a maximal chain transitive set.
\end{proof}

\section{Attractor}

This section consists of systematic  investigation to reproduce the concept of  attractors for semigroup and provide the alternative definition of a chain recurrent set. In \cite{conley}, Conley has introduced the notion of an attractor as a $\w$-limit set of a neighborhood of it. We shall define an attractor for a generalized  semigroup in analogous way.

In \cite {kl}, we have introduced the notion of $\w$-limit point for the generalized semigroup as follows:
%
A sequence of functions $(f_{n_k})$ in $G$ is said to be {\it{unbounded}} if  $n_k\to \ity$ as $k\to \ity$ and each $f_{n_k}$ consists of exactly $n_k$ iterates of  $g_{\al_{_0}}$, for fix $g_{\al_{_0}}\in \{g_{\al}\}_{\al}$, that is, $f_{n_k}=h_1\circ g_{\al_{_0}}\circ h_2 \circ g_{\al_{_0}} \circ h_3 \circ \ldots \circ h_{n_k}\circ g_{\al_{_0}} \circ h_{n_k+1}$, where each $h_i \in \widehat G= G\cup \{identity\}$ and the functions $h_i$ are independent of $g_{\al_{_0}}$.
%
A point $z\in X$ is called an {\it{$\w-$limit\ point}} for a point $x\in X$ if for some unbounded sequence $(f_{n_k})$ in $ G, \ f_{n_k}(x)\to z$ as $k \to \ity$. $\w(x)$ denotes the set of all  $\w$-limit points for $x$ .

 We introduce the notion of an  attractor for a  semigroup as follows:

\begin{definition} \label {tr}
Let $G$ be a semigroup on a metric space $(X,d)$. A nonempty open subset $U$ of $X$ is said to be a {\it trapping region} for $G$ if  there exists a $h\in  G$ such that  for the set
$$\widetilde U := \{fh(x): x\in U\ {\rm and \ } f \in \widehat G \}$$
we have,  $cl(\widetilde U) \subset U$.
\end{definition}

\begin{definition} \label {at}
Let $G$ be a semigroup on a metric space $(X,d)$. The {\it attractor} for $G$ determined by  a {\it trapping region} $U$ for $G$ is defined by
\begin{equation} \notag
\begin{split}
A:=&\{x\in X : {\rm for\ every\ open \ subset}\ V\ {\rm of}\  X\ {\rm containing}\  x,\ V \cap f_{n_k}(h(U))\ne \emptyset \\  
&\ {\rm for\  infinitely\  many}\  k \in \N, {\rm where}\ (f_{n_k})_k\ {\rm is\ some}\ {\rm unbounded\ sequence\  in\  } G \}
\end{split}
\end{equation}
where, $h\in G$ is as in the Definition \ref {tr}.
\end{definition}

\begin{definition}
Let $G$ be a semigroup on a metric space $(X,d)$. The {\it basin of attraction} of an attractor $A$ for $G$  is defined by
$$B(A):=\{x \in X : \w(x) \cap A \ne \emptyset \}.$$
\end{definition}

We notice the following characteristics of attractors and basin of attraction:

\begin{proposition}
The attractor $A$ determined by a trapping region $U$ is invariant under $G$.
\end{proposition}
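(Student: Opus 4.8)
The plan is to read off the invariance directly from Definition \ref{at}. Since $G=<g_\al>_\al$, by the observation in the introduction it suffices to show $g(x)\in A$ when $x\in A$ and $g=g_\al$ is a generator; in fact the argument below works verbatim for an arbitrary $g\in G$, using only that $G$ is closed under composition and that its elements are continuous, so I will simply take a general $g\in G$. Fix $x\in A$. Unwinding Definition \ref{at}, there is an unbounded sequence $(f_{n_k})_k$ in $G$ such that for every open set $V$ with $x\in V$ one has $V\cap f_{n_k}(h(U))\ne\emptyset$ for infinitely many $k$, where $h\in G$ is the map attached to the trapping region $U$ (which, together with $U$, stays fixed throughout). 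The natural candidate for a sequence witnessing $g(x)\in A$ is $(g\circ f_{n_k})_k$, relative to the very same $U$ and $h$.

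The first step is to confirm that $(g\circ f_{n_k})_k$ is again an unbounded sequence. By definition each $f_{n_k}$ is a composition containing exactly $n_k$ occurrences of a fixed generator $g_{\al_0}$, all remaining factors lying in $\widehat G$ and being free of $g_{\al_0}$, with $n_k\to\ity$. Writing $g$ as a composition of generators and grouping its maximal $g_{\al_0}$-free stretches, $g$ acquires exactly $m\ge 0$ occurrences of $g_{\al_0}$, interleaved with finitely many maps in $\widehat G$ free of $g_{\al_0}$. Forming $g\circ f_{n_k}$ and absorbing the $g_{\al_0}$-free tail of $g$ into the leading factor $h_1$ of $f_{n_k}$ (or, if $m=0$, absorbing all of $g$ into $h_1$), one sees that $g\circ f_{n_k}$ is a composition with exactly $n_k+m$ occurrences of $g_{\al_0}$ and all other factors in $\widehat G$ free of $g_{\al_0}$. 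Since $n_k+m\to\ity$, the sequence $(g\circ f_{n_k})_k$ is unbounded, for the same $g_{\al_0}$. I expect this bookkeeping with the normal form of the elements of $G$ to be the only point that needs care.

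The remaining step is topological. Let $W$ be an arbitrary open subset of $X$ containing $g(x)$. By continuity of $g$, the set $g^{-1}(W)$ is open and contains $x$, so the defining property of $(f_{n_k})_k$ supplies infinitely many $k$ with $g^{-1}(W)\cap f_{n_k}(h(U))\ne\emptyset$. For each such $k$ pick $p_k\in U$ with $f_{n_k}(h(p_k))\in g^{-1}(W)$; then $(g\circ f_{n_k})(h(p_k))=g\big(f_{n_k}(h(p_k))\big)\in W$, so $W\cap(g\circ f_{n_k})(h(U))\ne\emptyset$ for infinitely many $k$. As $W$ was an arbitrary neighbourhood of $g(x)$ and $(g\circ f_{n_k})_k$ is an unbounded sequence in $G$, Definition \ref{at} yields $g(x)\in A$. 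Since $x\in A$ and $g\in G$ were arbitrary, $A$ is invariant under $G$.
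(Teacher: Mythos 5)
Your proof is correct and follows essentially the same route as the paper: pull an open neighbourhood $W$ of $g(x)$ back to $g^{-1}(W)$, apply the defining property of $A$ at $x$, and push forward to see that the composed sequence $(g\circ f_{n_k})_k$ witnesses $g(x)\in A$. The only differences are cosmetic improvements — you work with an arbitrary $g\in G$ rather than just a generator, and you actually check that $g\circ f_{n_k}$ is an unbounded sequence (a point the paper merely asserts) — so there is nothing to correct.
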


\begin{proof}
Let $z \in A$ and $g_{\al}$ be a generator of $G$. Let $V$ be an open subset of $X$ containing $g_{\al}(z)$.  Then $g_{\al}^{-1}(V)$ is an open subset of $X$ containing $z$.

By  Definition \ref{at},  $g_{\al}^{-1}(V) \cap f_{n_k}(h(U))\ne \emptyset \ {\rm for\  infinitely\  many}\  k \in \N, {\rm where}\  (f_{n_k})_k\ {\rm is\ some}\\ {\rm unbounded\ sequence\  in\  } G$.  This gives $g_{\al}g_{\al}^{-1}(V) \cap g_{\al}f_{n_k}(h(U))\ne \emptyset$  and hence $V \cap g_{\al}f_{n_k}(h(U))\ne \emptyset$.

Since $(g_{\al}f_{n_k})$ is an unbounded sequence, it follows that $g_{\al}(z)\in A$.
\end{proof}

\begin{proposition}
The attractor $A$ determined by a trapping region $U$ is a closed set.
\end{proposition}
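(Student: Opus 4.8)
The plan is to prove $cl(A)=A$ by a short contradiction argument that does nothing more than unwind Definition \ref{at}. Suppose $z\in cl(A)$ but $z\notin A$. Negating the membership condition in Definition \ref{at}, there is an open set $V$ with $z\in V$ such that no unbounded sequence $(f_{n_k})_k$ in $G$ satisfies $V\cap f_{n_k}(h(U))\ne\emptyset$ for infinitely many $k$. On the other hand, since $z$ lies in the closure of $A$ and $V$ is an open neighbourhood of $z$, the set $V$ contains some point $w\in A$; and because $V$ is open with $w\in V$, it is also an open neighbourhood of $w$. Applying Definition \ref{at} to the point $w$ together with the open set $V$ then produces an unbounded sequence $(f_{n_k})_k$ in $G$ with $V\cap f_{n_k}(h(U))\ne\emptyset$ for infinitely many $k$, contradicting the choice of $V$. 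Hence $cl(A)\subset A$, so $A$ is closed.

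The one point that needs care is the exact quantifier structure of Definition \ref{at}: the argument above uses that, for a point of $A$, the witnessing unbounded sequence may be chosen \emph{after} the neighbourhood $V$ is given (the natural reading of ``for every open $V\ni x$, $V\cap f_{n_k}(h(U))\ne\emptyset$ for infinitely many $k$, where $(f_{n_k})_k$ is some unbounded sequence''). If instead one insists on a single unbounded sequence that works simultaneously for all neighbourhoods of the point, the proof is not immediate and I would argue via a diagonal construction: using the metric, pick $z_m\in A$ with $d(z_m,z)<2^{-m}$; apply the defining property of $z_m$ to the ball $B(z_m,2^{-m})$ to extract from $z_m$'s witnessing sequence a single map $\phi_m$ consisting of at least $m$ iterates of a generator, together with a point $w_m\in\phi_m(h(U))\cap B(z_m,2^{-m})$, so that $w_m\to z$; then $(\phi_m)_m$ witnesses $z\in A$ because any open $V\ni z$ contains a ball $B(z,\e)$, and $w_m\in B(z,\e)\cap\phi_m(h(U))$ for all large $m$.

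The main obstacle in that variant is purely bookkeeping of the \emph{generator}: the definition of an unbounded sequence fixes one generator $g_{\al_{_0}}$, whereas different $z_m$ may a priori supply sequences based on different generators, so before forming $(\phi_m)_m$ one must pass to a subsequence on which the underlying generator is constant — available, for instance, whenever the generating family $\{g_{\al}\}_{\al}$ is countable. Either way, the proposition reduces to reading Definition \ref{at} carefully rather than to any substantial new work, so I would present the first, contradiction-based argument as the proof.
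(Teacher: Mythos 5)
Your argument is correct and is essentially the paper's own proof run in contrapositive form: the paper takes $y\in cl(A)$, fixes an arbitrary open $V\ni y$, picks a point $z_N\in A\cap V$ from a sequence in $A$ converging to $y$, and applies Definition \ref{at} to $z_N$ with that same $V$ — exactly your first argument. Your reading of the quantifiers (the unbounded sequence may depend on the neighbourhood) is also the one the paper uses, so the diagonal variant is not needed.
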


\begin{proof}
Let $y\in cl(A)$.  Then there is a sequence $(z_n)$ in $A$ converging to $y$. 

Let $V$ be an open subset of $X$ containing $y$. Since $(z_n)$ converges to $y$, there exists $N\in \N$ such that $z_n \in V$ for all $n \ge N$. Since $z_N \in A$ and V is an open set containing $z_N$, then by Definition \ref{at}, $ V \cap f_{n_k}(h(U))\ne \emptyset$  for  infinitely  many  $k \in \N$,  where  $(f_{n_k})_k$ is some unbounded sequence  in  $ G$. Thus, we have, $y \in A$.
\end{proof}

\begin{proposition} \label {atp1}
Let $A$ be the attractor determined by a trapping region $U$. If the phase space $X$ is compact then $A$ is contained in $U$.
\end{proposition}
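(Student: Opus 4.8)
The plan is to unwind Definition~\ref{at} and play it against the defining inequality of a trapping region, $cl(\widetilde{U})\subset U$. The single observation that makes everything go through is that for every $f\in\widehat G$ one has $f(h(U))\subseteq\widetilde{U}$: indeed a point $f(h(x))$ with $x\in U$ and $f\in\widehat G$ is by the very definition $\widetilde{U}=\{fh(x):x\in U,\ f\in\widehat G\}$ a point of $\widetilde{U}$. Since every term $f_{n_k}$ of an unbounded sequence in $G$ lies in $G\subseteq\widehat G$, this gives $f_{n_k}(h(U))\subseteq\widetilde{U}$ for all $k$.

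With that in hand I would argue as follows. Let $x\in A$. By Definition~\ref{at} there is an unbounded sequence $(f_{n_k})_k$ in $G$ such that every open set $V$ containing $x$ meets $f_{n_k}(h(U))$ for infinitely many $k$; in particular $V\cap f_{n_k}(h(U))\neq\emptyset$ for at least one $k$. By the observation above $f_{n_k}(h(U))\subseteq\widetilde{U}$, hence $V\cap\widetilde{U}\neq\emptyset$. As this holds for every open neighbourhood $V$ of $x$, we get $x\in cl(\widetilde{U})$, and the trapping condition then yields $x\in cl(\widetilde{U})\subset U$. Therefore $A\subseteq U$. If one prefers the sequential phrasing used elsewhere in the paper (here $X$ is first countable), fix a decreasing neighbourhood base $V_1\supseteq V_2\supseteq\cdots$ at $x$, choose for each $j$ an index $k_j$ and a point $u_j\in U$ with $f_{n_{k_j}}(h(u_j))\in V_j$, and note that $(f_{n_{k_j}}(h(u_j)))_j$ is a sequence in $\widetilde{U}$ converging to $x$, so again $x\in cl(\widetilde{U})\subset U$.

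I do not expect a genuine obstacle: the inclusion $f_{n_k}(h(U))\subseteq\widetilde{U}$ is immediate, and the rest is a one-line closure argument. The only point worth flagging is that compactness of $X$ does not appear to be needed for the inclusion $A\subseteq U$ itself; it is presumably listed as a hypothesis because it guarantees that $A$ is nonempty, so that the statement is not vacuous. That nonemptiness I would establish separately: pick any $u\in U$ and extract an accumulation point of the sequence $(g_{\al_0}^{n}(h(u)))_n$, which lies in the compact set $cl(\widetilde{U})\subseteq U$; taking $f_{n_k}=g_{\al_0}^{n_k}$ along a subsequence realizing that accumulation point exhibits it as an element of $A$.
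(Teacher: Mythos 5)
Your proof is correct, and it takes a cleaner route than the paper's. The paper also starts from the inclusion $f_{n_k}(h(U))\subset\widetilde U\subset cl(\widetilde U)$, but then invokes compactness of $cl(\widetilde U)$ to produce a uniform $\eta>0$ such that the $\eta$-neighbourhood of $cl(\widetilde U)$ lies in $U$, and concludes that a point of $A$, being within $\e\le\eta$ of some $y\in cl(\widetilde U)$, must lie in $U$. You instead observe that every neighbourhood of $x\in A$ meets $\widetilde U$, so $x\in cl(\widetilde U)\subset U$ outright. This is a strictly stronger conclusion ($A\subset cl(\widetilde U)$, not just $A\subset U$), it makes no use of compactness, and it avoids the metric detour entirely; your diagnosis that compactness is only relevant for nonemptiness of $A$ is accurate, and your sketch of that nonemptiness argument (accumulation point of $(g_{\al_0}^n(h(u)))_n$ in the compact set $cl(\widetilde U)$, realized by the unbounded sequence $f_{n_k}=g_{\al_0}^{n_k}$) is sound and is essentially the mechanism the paper uses in the subsequent proposition on $U\subset B(A)$. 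The only thing the paper's $\eta$-argument buys is reuse: the same $\eta$ is invoked again in the proof of the main theorem, so the authors set it up here; but as a proof of this proposition alone, yours is the more economical one.
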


\begin{proof}
Since $U$ is a trapping region there exists a $h\in  G$ such that  $cl(\widetilde U) \subset U$. Since $(X,d)$ is a compact metric space, $cl(\widetilde U)$ is compact. Thus, there exists $\eta >0$ such  that if $y \in cl(\widetilde U)$, $z \in X$, and $d(y,z)<\eta$, then $z\in U$.

Now, if $x \in A$ and $\e \in(0, \eta]$ then $B_{\e}(x)\cap f_{n_k}(h(U)) \ne \emptyset$  for  infinitely  many  $k \in \N$,  where  $(f_{n_k})_k$ is some unbounded sequence  in  $ G$ and $B_{\e}(x)$ is an open ball with centre $x$ and radius $\e$.  Then
$$ f_{n_k}(h(U)) \subset \widetilde U \subset cl(\widetilde U).$$

Therefore, $B_{\e}(x)\cap cl(\widetilde U) \ne \emptyset$. Let $y \in B_{\e}(x)\cap cl(\widetilde U)$. Since
$$d(x,y)<\e \le \eta ,$$
we have, $x\in U$.
\end{proof}

\begin{proposition} \label {atp2}
Let $A$ be the attractor determined by a trapping region $U$. If the phase space $X$ is compact  then $B(A)$ contains $U$. 
\end{proposition}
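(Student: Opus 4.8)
The plan is to prove directly that every $x\in U$ lies in $B(A)$, i.e.\ that $\w(x)\cap A\ne\emptyset$. Throughout, let $h\in G$ be the element furnished by Definition~\ref{tr} for the trapping region $U$, and fix once and for all a generator $g_{\al_0}$ of $G$. The first thing I would record is that for every integer $n\ge 0$ the point $g_{\al_0}^n(h(x))$ lies in $\ti U$, hence in $cl(\ti U)$: indeed $g_{\al_0}^n\in\widehat G$ and $x\in U$, so $g_{\al_0}^n(h(x))=f(h(x))$ with $f=g_{\al_0}^n$ is one of the points making up $\ti U$ in Definition~\ref{tr}.

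Next I would invoke compactness. Since $X$ is compact and $cl(\ti U)$ is closed, $cl(\ti U)$ is a compact metric space, hence sequentially compact; so the sequence $(g_{\al_0}^n(h(x)))_{n\in\N}$ has a subsequence converging to some $z\in cl(\ti U)$, say $g_{\al_0}^{n_k}(h(x))\to z$ with $n_k\to\ity$. I claim $z\in\w(x)$: setting $F_k:=g_{\al_0}^{n_k}\circ h\in G$ we have $F_k(x)\to z$, and $(F_k)_k$ is an unbounded sequence in $G$ (in the sense recalled just before Definition~\ref{tr}). For this, fix an expression of $h$ as a composition of iterations of the generators; if $g_{\al_0}$ occurs $m\ge 0$ times in it, then $F_k$ is a composition of iterations of the generators in which $g_{\al_0}$ occurs exactly $n_k+m$ times, with the interpolating blocks independent of $g_{\al_0}$, and $n_k+m\to\ity$. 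Hence $z\in\w(x)$.

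It then remains to check that $z\in A$ in the sense of Definition~\ref{at}. Let $V$ be any open subset of $X$ with $z\in V$. Since $g_{\al_0}^{n_k}(h(x))\to z$, there is $K\in\N$ with $g_{\al_0}^{n_k}(h(x))\in V$ for all $k\ge K$; and since $x\in U$ we have $g_{\al_0}^{n_k}(h(x))\in g_{\al_0}^{n_k}(h(U))$. Hence $V\cap g_{\al_0}^{n_k}(h(U))\ne\emptyset$ for infinitely many $k$, while $(g_{\al_0}^{n_k})_k$ is plainly an unbounded sequence in $G$ (take all the interpolating maps equal to the identity). By Definition~\ref{at}, $z\in A$. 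Combining, $z\in\w(x)\cap A$, so $x\in B(A)$; as $x\in U$ was arbitrary, $U\subset B(A)$.

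I do not anticipate a serious obstacle: compactness of $X$ is used exactly once, to extract the convergent subsequence, and the rest is unwinding the paper's definitions. The only point that calls for care is confirming that $(g_{\al_0}^{n_k}\circ h)_k$ literally satisfies the paper's definition of an unbounded sequence, since $h$ itself may involve $g_{\al_0}$; but this merely shifts the iterate count by the constant $m$, so it is harmless. Note in particular that Proposition~\ref{atp1} is not needed here; only the defining formula for $\ti U$ and the compactness of $cl(\ti U)$ enter.
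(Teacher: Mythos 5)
Your proof is correct and follows essentially the same route as the paper's: take an unbounded sequence applied to $h(x)$, use compactness to extract a convergent subsequence, and check that the limit lies in both $\w(x)$ and $A$. The only difference is cosmetic — you fix the concrete sequence $(g_{\al_0}^{n})$ and explicitly verify that $g_{\al_0}^{n_k}\circ h$ is unbounded even when $h$ contains occurrences of $g_{\al_0}$, a technicality the paper's proof passes over silently.
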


\begin{proof}
Let $x\in U$ and $(f_{n_k})$ be an unbounded sequence.

Since $X$ is compact there is a $z\in X$ and a subsequence $(n_{k_l})$ such that $f_{n_{k_l}}h(x) \to z$, where, $h\in  G$ is such that  $cl(\widetilde U) \subset U$. Let $V$ be an open subset of $X$ containing $z$. Then there exists  a $N \in \N$ such that
$$V \cap f_{n_{k_l}}h(x) \ne \emptyset, \quad {\rm for\ all} \ l\ge N.$$
Thus,
$$V \cap f_{n_{k_l}}h(U) \ne \emptyset, \quad {\rm for\ all} \ l\ge N.$$

Therefore, $z \in A \cap \w(x)$ and hence $U \subset B(A)$.
\end{proof}

\begin{theorem}  \label{atth}
Let $(X,d)$ be a compact metric space and $G$ be an abelian semigroup. The chain recurrent set of $G$ is the complement of the union of sets $B(A) \setminus A$ as $A$ varies over the collection of attractors  of $G$:
$$X \setminus CR(G)=\bigcup_A[B(A) \setminus A].$$
\end{theorem}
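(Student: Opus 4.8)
The proof splits into the two inclusions $\bigcup_A[B(A)\setminus A]\subseteq X\setminus CR(G)$ and $X\setminus CR(G)\subseteq\bigcup_A[B(A)\setminus A]$; equivalently, (a) no chain recurrent point belongs to $B(A)\setminus A$ for any attractor $A$, and (b) every point that fails to be chain recurrent lies in $B(A)\setminus A$ for some attractor $A$ one has to construct.

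For (a) I would fix an attractor $A$ with trapping region $U$ and associated map $h\in G$, and work with the set $\widetilde U=\{fh(y):y\in U,\ f\in\widehat G\}$ of Definition \ref{tr}. The two facts that make the trap work are: $\widetilde U$ is forward invariant under $\widehat G$ and contains $h(U)$ (both immediate from the formula for $\widetilde U$), and, since $cl(\widetilde U)$ is a compact subset of the open set $U$, there is $\eta>0$ with $B_\eta(cl(\widetilde U))\subseteq U$. Given $x\in B(A)\setminus A$, argue by contradiction and assume $x\in CR(G)$. Since $\w(x)\cap A\neq\emptyset$ and $A\subseteq U$ by Proposition \ref{atp1}, one obtains an unbounded sequence $(f_{n_k})$ and an index $K$ with $q:=f_{n_K}(x)\in U$; here $K$ should be taken large enough that $q$ also lies in a prescribed small neighbourhood of $A$ (using invariance of $A$) and $n_K$ exceeds the threshold of the uniform attraction lemma below. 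The key device is to choose the single map $g^{*}:=h\circ f_{n_K}$ in the $(\e,g^{*})$-chain: commutativity of $G$ lets one rewrite $g^{*}(z)=f_{n_K}(h(z))$, so $g^{*}(z)\in\widetilde U$ whenever $z\in U$, while $g^{*}(x)=h(q)\in h(U)\subseteq\widetilde U$ because $q\in U$. Hence for $\e\le\eta$ every $(\e,g^{*})$-chain starting at $x$ enters $U$ at its first step and stays in $U$ thereafter; moreover each image $g_ig^{*}(x_i)$ lies in $g_if_{n_K}(h(U))$, a neighbourhood of $A$ that shrinks as $n_K$ grows (the analogue of Conley's lemma that $f_n(h(U))\to A$, extracted from compactness of $cl(\widetilde U)$), so the chain is pinned inside an arbitrarily small neighbourhood of $A$ after its first step. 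Taking that neighbourhood small enough to miss $x$ (possible since $x\notin A$ and $A$ is closed) contradicts $x_{m+1}=x$, so $CR(G)\cap(B(A)\setminus A)=\emptyset$.

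For (b), suppose $x\notin CR(G)$, so there are $\e_0>0$ and $g_0\in G$ admitting no $(\e_0,g_0)$-chain from $x$ to $x$. Let $P$ be the set of points reachable from $x$ by an $(\e_0/2,g_0)$-chain, put $U:=B_{\e_0/2}(P)$, and take for the map of Definition \ref{tr} a suitable composite of $g_0$ with enough iterates of the distinguished generator. The plan is to check that $U$ is a trapping region: appending one more $(\e_0/2,g_0)$-step to a chain ending in $cl(U)$ keeps its endpoint in $B_{\e_0/2}(P)=U$, which is exactly $cl(\widetilde U)\subseteq U$ once the bookkeeping defining $\widetilde U$ is matched up — and it is precisely here that commutativity of $G$ is invoked to rearrange compositions, as in Proposition \ref{ct}. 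Then $x\in U\subseteq B(A)$ by Proposition \ref{atp2}, while $x\notin A$: if $x\in A$, the description in Definition \ref{at} would produce, for arbitrarily small neighbourhoods $V$ of $x$, points of $h(U)$ carried back into $V$ by unbounded maps; splicing such a return onto an $(\e_0/2,g_0)$-chain from $x$ into $U$ would manufacture an $(\e_0,g_0)$-chain from $x$ to $x$, a contradiction. Hence $x\in B(A)\setminus A$.

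The step I expect to be the real obstacle, common to both inclusions, is the reconciliation of the three ``orbit-type'' objects in play: $(\e,g)$-chains (a fixed $g$ postcomposed with arbitrary elements of $\widehat G$ at each step), $\w$-limit sets, and the set $A$ itself (defined via unbounded sequences, i.e.\ compositions containing exactly $n_k$ iterates of a fixed generator). Classically these all collapse to the forward orbit and $\w(x)$ is orbit invariant; here it is not, as stressed in the Introduction. So one must (i) prove the uniform attraction lemma — that $g'(h(U))$ collapses onto $A$ as the count of the distinguished generator in $g'$ grows — directly from compactness of $cl(\widetilde U)$, taking care which generator is distinguished, and (ii) control the iterate counts when converting chains into unbounded sequences and back. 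Commutativity of $G$ is what legitimises the rearrangements; once this translation is in place, the trapping and compactness arguments above close the proof.
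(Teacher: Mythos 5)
Your overall architecture matches the paper's, but both halves have concrete gaps. The more serious one is in (b): the assertion ``$x\in U\subseteq B(A)$'' is false and in fact contradicts your own construction. Since every point of $U=B_{\e_0/2}(P)$ lies within $\e_0/2$ of a point reachable from $x$ by an $(\e_0/2,g_0)$-chain, $x\in U$ would let you close that chain up into an $(\e_0,g_0)$-chain from $x$ to itself, which is exactly what was assumed not to exist; so necessarily $x\notin U$ (and this is needed anyway, since $A\subseteq U$ is how one gets $x\notin A$). Proposition \ref{atp2} therefore does not directly put $x$ in $B(A)$. The missing step, which is how the paper closes this half, is to observe that $g_0(x)\in U$ via the one-link chain $(x,g_0(x);\mathrm{identity})$, hence $\w(g_0(x))\cap A\neq\emptyset$ by Proposition \ref{atp2}, and then to use $\w(g_0(x))\subseteq\w(x)$ to conclude $x\in B(A)$. (A smaller bookkeeping issue in the same half: with $U=B_{\e_0/2}(P)$, appending one exact link to a chain ending within $\e_0/2$ of $P$ only shows that $cl(\widetilde U)$ lies in the set of $(\e_0,g_0)$-reachable points, not back inside $B_{\e_0/2}(P)$. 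The paper avoids this by taking $U$ to be the set of points $(\e_0,g_0)$-reachable from $x$ itself, which is automatically open because the last link of any chain has $\e_0$ of slack.)

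In (a), your trapping computation (choosing the chain map $h\circ f_{n_K}$, using commutativity to see that each image $g_i\circ h\circ f_{n_K}(x_i)$ lands in $\widetilde U$, hence every subsequent $x_i$ lies in $U$) is exactly the paper's. But your finish hinges on the ``uniform attraction lemma'' that $g\,f_{n}(h(U))$ collapses onto $A$ as $n$ grows, uniformly over $g\in\widehat G$, which you flag but do not prove; as written the argument is incomplete at its crux. The paper shows this lemma is unnecessary: from the last link of the $k$-th chain one reads off $B_\e(p)\cap h_{l_k}f_{n_k}(h(U))\neq\emptyset$ for all $k\ge N$ and all $\e\in(0,\eta]$, and the maps $h_{l_k}f_{n_k}$ assemble into an unbounded sequence, so this is verbatim the membership criterion of Definition \ref{at} and yields $p\in A$ directly, contradicting $p\in B(A)\setminus A$ with no statement about convergence of the sets $f_n(h(U))$ to $A$. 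Your lemma is probably provable by a compactness argument in the spirit of Definition \ref{at}, but rerouting through the definition of $A$ as the paper does is both shorter and sidesteps the ambiguity you yourself raise about which generator is ``distinguished'' in the composites $g\,f_{n_K}$.
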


\begin{proof}
Let $A$ be an attractor of $G$ and $p\in B(A) \setminus A$. Let $U$ be a trapping region which determines $A$, and $h\in  G$ is such that  $cl(\widetilde U) \subset U$. As in Proposition \ref {atp1}, there exists $\eta >0$ such  that if $y \in cl(\widetilde U)$, $z \in X$, and $d(y,z)<\eta$, then $z\in U$.

For $p\in B(A)$, we have, $\w (p) \cap A \ne \emptyset$.  Let  $z\in \w (p) \cap A \subset U$. Therefore, there exists an unbounded sequence $(f_{n_k})$ in $G$ and a $N \in \N$ such that
$$f_{n_k}(p) \to z$$
and
$$\qquad f_{n_k}(p)\in  U,\quad {\rm for\ all} \ k\ge N.$$

Now, if $p \in CR(G)$, then for $k \ge N$ and  $\e \in(0, \eta]$ there exists an $(\e,f_{n_k}h)$-chain $(p=x_1, \ldots , x_{l_k+1}=p;h_1, \ldots ,h_{l_k})$ from $p$ to itself. Since $G$ is abelian and $ f_{n_k}(p)\in  U$, we have,
$$h_1f_{n_k}h(x_1) \in h_1h(U) \subset  \widetilde U \subset cl(\widetilde U).$$
Since 
$$d(h_1f_{n_k}h(x_1),x_2)<\e \le \eta ,$$
we have, $x_2 \in  U$. Similarly,
$$h_2f_{n_k}h(x_2) \in  \widetilde U \subset cl(\widetilde U)$$
and $d(h_2f_{n_k}h(x_2),x_3)<\e \le \eta $ gives $x_3 \in U$. Following the previous arguments, we have $x_i \in U$ for every $i=1, \ldots ,{l_k}$. Since 
$$d(h_{l_k}f_{n_k}h(x_n),p)<\e,$$
we have,
$$B_{\e}(p)\cap h_{l_k}f_{n_k}h((U)) \ne \emptyset.$$

Now, the sequence of functions defined by
\begin{equation}\notag
f_{m_k}=
\begin{cases}
h_{l_k}f_{n_k}, & k \ge N\\
f_{n_k}, & k < N
\end{cases}
\end{equation}
is an unbounded sequence in $G$. Also for any open subset $V$ of $X$ containing $p$, there is $\e \in(0, \eta]$ such that $B_{\e}(p) \subset V$ and hence
$$V \cap f_{m_k}(h(U)) \supset B_{\e}(p)\cap f_{m_k}(h(U)) \ne \emptyset ,$$
for all $k \ge N$. Thus $p \in A$, which is a contradiction. 

Therefore, if $p\in B(A) \setminus A$ then $p \notin CR(G)$.

Conversely, let $p \notin CR(G)$. Then there exists $\e >0$ and $h_1 \in  G$ such that there is no $(\e,h_1)$-chain from $p$ to itself. Consider the set defined by
$$U:=\{x\in X:{\rm there\ is\ an}\ (\e,h_1){\rm -chain\ from\ } p\ {\rm to\ }x \}.$$
Then $p \notin U$ and  
$$\widetilde U := \{fh_1(x): x\in U\ {\rm and \ } f \in \widehat G \}.$$

Also $U$ is an open subset of $X$. Let $x\in U$, there exists an $(\e,h_1)$-chain $(p=x_1, \ldots , x_{n+1}=x;f_1, \ldots ,f_n)$ from $p$ to $x$. Now, $x\in B_{\e}(f_nh_1(x_n)) \subset U$. If $y \in B_{\e}(f_nh_1(x_n))$ then $(p=x_1, \ldots , x_{n+1}=y;f_1, \ldots ,f_n)$ from $p$ to $x$ is  an $(\e,h_1)$-chain .

Let $y \in cl(\widetilde U)$ . Then for some $x \in U$ and $f \in \widehat G$, we have, $f h_1(x) \in B_{\e}(y)$. Thus $(x,y;f)$ is an  $(\e ,h_1)$-chain from $x$ to $y$. By transitivity, we can produce an $(\e,h_1)$-chain from $p$ to $y$.
 Hence $cl(\widetilde U)) \subset U$.

Therefore, $U$ is a trapping region for $G$.

Let $A$ be the attractor determined by $U$. Since $p \notin U$ and $A \subset U$, it follows that $p \notin A$.

Moreover, $(p,h_1(p); identity)$ is an $(\e,h_1)$-chain from $p$ to $h_1(p)$. Hence, $h_1(p)\in U$.  By Proposition \ref {atp2}, we have, $\w(h_1(p)) \cap A \ne \emptyset$. Thus, $\w(h_1(p)) \subset \w(p)$ implies $p \in B(A)$. Therefore, $p \in B(A) \setminus A$.
\end{proof}

Thus, Theorem \ref{atth} generalizes the result obtained by  Hurley in \cite {mh} following Conley\rq{}s characterization in \cite {conley}.

The following example will illustrate the application of the above theorem.
\begin{example}
Consider the family of  polynomial mappings on closed unit disc in complex plane, that is $X=\{z\in \C : |z|\le 1\}$, for each $n\in \N$, given by
$$g_n : X \to X$$
$$g_n(z)=z^n.$$

The semigroup $G=\{ g_n: n \in \N\setminus\{1\}\}$, which is closed under composition, consists of non-identity continuous self maps of the closed unit disc in complex plane $\C$. Now, by Fundamental Theorem of Arithmetic, we can see that the semigroup $G$  has set of generators given  by $\{g_p: p{\rm\  is\ a\ prime\ natural \ number}\}$.

We shall identify all the trapping regions and attractors for the dynamics of $G$ on the phase space $X$. Evidently, the empty set and the phase space itself are trapping regions for $G$. Moreover, in this case, the corresponding attractor and basin of attraction coincide with the trapping regions respectively. Thus, we have, $B(A) \setminus A=\emptyset$ for each attractor.

 For each $r\in (0,1)$, the set $U_r=\{z\in \C :|z| < r\}$ is a trapping region for $G$. Since $z\in U_r$ implies $|g(z)| \leq |z|^2 <r$ for every $g\in G$. In particular, for $\widetilde U_r=\{gg_{_2}(z): z\in U_r {\rm \ and\ } g \in \widehat G\}$, we have, $cl(\widetilde U_r) \subset U_r$. Here, the attractor $A_r$ is $\{0\}$ for each $r$ and the basin of attraction $B(A_r)$ is the set $\{z\in \C : |z|<1\}$ for each $r$. Therefore, in this case, $B(A_r) \setminus A_r=\{z\in \C : 0<|z|<1\}$.

Since the action of elements of the semigroup results on the set $\{z\in \C : |z|<1\}$  in radial contraction. Therefore, the attractors and basin of attraction for any proper non empty trapping region correspond to the above pair.

By Theorem \ref{atth}, we have,
\begin{equation} \notag
\begin{split}
X \setminus CR(G) &= \bigcup_A[B(A) \setminus A] \\
&=\emptyset \cup \{z\in \C : 0<|z|<1\} \\
&= \{z \in \C : |z|\neq 0, |z|\neq 1\}.
\end{split}
\end{equation}

Consequently, we have, $CR(G)=\{z \in \C : |z|= 0, |z|= 1\}$.

Next we shall find all the chain recurrent points for $G$  and show that $CR(G)=\{z \in \C : |z|= 0, |z|= 1\}$.

Since 0 is a fixed point of $G$, hence a chain recurrent point for $G$.

Let $z\in \{z\in \C : 0<|z|<1\}$, we shall show that $z$ is not a chain recurrent point for $G$. Let $m,M \in \N$ be sufficiently large, so that $1/m$ is small enough such that $|g_{_M}(z)-z|>\frac{1}{m}$.  Then there is no $(\frac{1}{m},g_{_M})$-chain from $z$ to itself. Thus, $z$ is not a chain recurrent point for $G$.

Let $z_0 \in \C$ be such that $|z_0|=1$, that is, $z_0=Exp(\io \pi \ta_0)$.  Let $\e >0$ and $n_0\in \N \setminus \{1\}$. We shall construct an $(\e, g{_{n_0}})$-chain from $z_0$ to itself. 

Since rationals are dense in reals, we can pick a $z_1=Exp(\io \pi p_1/q_1) \in B_{\e}( g{_{n_0}}(z_0))=\{z:|g{_{n_0}}(z_0)-z|<\e\}$, for some $p_1,q_1 \in \N $. Then, 
\begin{equation} \notag
\begin{split}
g_{2q_1}g{_{n_0}}(z_1) &=Exp(\io \pi p_1 n_0 2q_1/q_1)\\
&=1.
\end{split}
\end{equation}

Let $p,q \in \N$ be such that $w=Exp(\io \pi p/q) \in B_{\e}(z_0)$. There is a $N \in \N$ sufficiently large such that $z_2=Exp(\io \pi p/(qn_0N)) \in B_{\e}(1)$. Then $g_{_N}g_{n_0}(z_2)=w$ and $|w-z_0|< \e$.

Therefore, $(z_0,z_1,z_2,z_0;identity,g_{2q_1},g_{_N} )$ is an $(\e, g{_{n_0}})$-chain from $z_0$ to itself. Thus, $z_0$ is a chain recurrent point for $G$ and, we have, $CR(G)=\{z \in \C : |z|= 0, |z|= 1\}$.
\end{example}

{\bf Acknowledgment.}
I am thankful to my thesis adviser Sanjay Kumar for fruitful discussions.

\end{document}